\newtheorem{theorem}{Theorem}
\newtheorem{lemma}{Lemma}
\begin{document}

\baselineskip=17pt

\title{\bf Prime numbers of the form $\mathbf{[n^c tan^\theta(log n)]}$}

\author{\bf S. I. Dimitrov}

\date{2021}

\maketitle

\begin{abstract}
Let $[\, \cdot\,]$ be the floor function.
In the present paper we prove that when $1<c<\frac{12}{11}$ and $\theta>1$ is a fixed,
then there exist infinitely many prime numbers of the form $[n^c \tan^\theta(\log n)]$.\\
\quad\\
\textbf{Keywords}: Primes $\cdot$ Exponential sums\\
\quad\\
{\bf  2020 Math.\ Subject Classification}:  11L07 $\cdot$ 11N25
\end{abstract}

\section{Notations}
\indent

The letter $p$  with or without subscript will always denote prime number.
By $\varepsilon$ we denote an arbitrary small positive constant, not the same in all appearances.
We denote by $\Lambda(n)$ von Mangoldt's function.
Moreover $e(y)=e^{2\pi i y}$. As usual $[t]$ and $\{t\}$ denote the integer part, respectively, the fractional part of $t$.
We denote by $\tau _k(n)$ the number of solutions of the equation $m_1m_2\ldots m_k$ $=n$ in natural numbers $m_1,\,\ldots,m_k$.
Throughout this paper we suppose that $1<c<\frac{12}{11}$.
Assume that $\theta>1$ is a fixed.
Denote
\begin{align}
\label{gamma}
&\gamma=\frac{1}{c}\,;\\
\label{psit}
&\psi(t)=\{t\}-1/2\,;\\
\label{Delta1}
&\Delta_1=e^{\pi\big[\frac{\log x}{\pi}\big]+\arctan1}\,;\\
\label{Delta2}
&\Delta_2=e^{\pi\big[\frac{\log x}{\pi}\big]+\arctan2}\,;\\
\label{Scx}
&S_c(x)=\sum\limits_{\Delta_1\leq n<\Delta_2\atop{[n^c \tan^\theta(\log n)]=p}}1\,.
\end{align}

\section{Introduction and statement of the result}
\indent

The problem of the existence of infinitely many  prime numbers of a special form is as
interesting as it is difficult in prime number theory.
It is not known if there exists any quadratic polynomial that takes infinitely many prime values.
There is a conjecture that there exist infinitely many  prime numbers of the form
$n^2+1$, which is out of reach of the current state of the mathematics.
For this reason, the mathematical world deals with the accessible problem of primes of the form $[n^c]$.

Let $\mathbb{P}$ denotes the set of all prime numbers.
In 1953 Piatetski-Shapiro \cite{Shapiro} has shown that for any fixed $1<c<\frac{12}{11}$ the set
\begin{equation*}
\mathbb{P}_c=\{p\in\mathbb{P}\;\;|\;\; p= [n^c]\;\; \mbox{ for some } n\in \mathbb{N}\}
\end{equation*}
is infinite.
The prime numbers of the form $p = [n^c]$ are called Piatetski-Shapiro primes.
Denote
\begin{equation*}
\pi_c(x)=\sum\limits_{n\leq x\atop{[n^c]=p}}1\,.
\end{equation*}
Piatetski-Shapiro's result states that
\begin{equation*}
\pi_c(x)\sim \frac{x}{c\log x}
\end{equation*}
for
\begin{equation*}
1<c<\frac{12}{11}\,.
\end{equation*}
Subsequently the interval for $c$ was sharpened many times \cite{Baker-Harman}, \cite{Heath2},
\cite{Jia1}, \cite{Jia2}, \cite{Kolesnik1}, \cite{Kolesnik2}, \cite{Kumchev}, \cite{Leitmann},  \cite{Liu-Rivat}, \cite{Rivat}.
To achieve a longer interval for $c$ the authors used the fact that the upper bound for $c$ is closely connected
with the estimate of an exponential sum over primes.
The best results up to now belongs to Rivat and Sargos \cite{Rivat-Sargos} with
\begin{equation*}
\pi_c(x)\sim \frac{x}{c\log x}
\end{equation*}
for
\begin{equation*}
1<c<\frac{2817 }{2426}
\end{equation*}
and to Rivat and Wu \cite{Rivat-Wu} with
\begin{equation*}
\pi_c(x)\gg \frac{x}{\log x}\,.
\end{equation*}
for $1<c<\frac{243}{205}$.

Motivated by these results we investigate the existence of infinitely many prime numbers of the form
associated with the form of Piatetski-Shapiro primes.
We show that when $1<c<\frac{12}{11}$ and $\theta>1$ is a fixed, then the set
\begin{equation*}
\mathbb{T}_c=\{p\in\mathbb{P}\;\;|\;\; p= [n^c \tan^\theta(\log n)]\;\; \mbox{ for some } n\in \mathbb{N}\}
\end{equation*}
is infinite.
More precisely we prove the following theorem.
\begin{theorem} Let $1<c<\frac{12}{11}$ and $\theta>1$ is a fixed.
Then
\begin{equation*}
S_c(x)\gg \frac{x}{\log x}\,.
\end{equation*}
\end{theorem}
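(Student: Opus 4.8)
The plan is to pass from counting $n$ to a von Mangoldt--weighted sum and then to analyse the latter by the classical Piatetski--Shapiro device. Write $f(n)=n^c\tan^\theta(\log n)$. First I would record the structural facts forced by the special interval $[\Delta_1,\Delta_2)$: since $\log n-\pi\big[\tfrac{\log x}{\pi}\big]$ runs over $[\arctan1,\arctan2]$ and $\tan$ has period $\pi$, one has $\tan(\log n)\in[1,2]$ throughout, so $f$ is smooth, strictly increasing, with $f(n)\asymp n^c$ and $f'(n)\asymp n^{c-1}\gg1$; also $\Delta_2-\Delta_1\asymp x$. Consequently the map $n\mapsto[f(n)]$ is injective on the range. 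I would then reduce the theorem to the lower bound
\begin{equation*}
\widetilde{S}_c(x):=\sum_{\Delta_1\le n<\Delta_2}\Lambda\big([f(n)]\big)\gg x,
\end{equation*}
since every term with $[f(n)]$ prime contributes $\log[f(n)]\ll\log x$, while the proper prime powers $[f(n)]=p^k$ ($k\ge2$) contribute only $O\big(x^{c/2}\log x\big)=o(x)$ by injectivity; together these yield $S_c(x)\gg\widetilde{S}_c(x)/\log x\gg x/\log x$.

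To evaluate $\widetilde{S}_c(x)$ I would switch to the prime-power variable. Writing $\phi=f^{-1}$ and using that $[f(n)]=m$ has at most one solution $n$, the number of such $n$ equals $\big(\phi(m+1)-\phi(m)\big)+\big(\psi(-\phi(m+1))-\psi(-\phi(m))\big)$, whence
\begin{equation*}
\widetilde{S}_c(x)=\sum_{m}\Lambda(m)\big(\phi(m+1)-\phi(m)\big)+\sum_{m}\Lambda(m)\big(\psi(-\phi(m+1))-\psi(-\phi(m))\big),
\end{equation*}
the sums running over prime powers $m\in[f(\Delta_1),f(\Delta_2))$. For the first (main) sum the mean value theorem gives $\phi(m+1)-\phi(m)\approx\phi'(m)$, and partial summation against the prime number theorem yields $\sum_m\Lambda(m)\phi'(m)\sim\int\phi'=\phi(f(\Delta_2))-\phi(f(\Delta_1))=\Delta_2-\Delta_1\asymp x$, which is exactly the desired $\gg x$.

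The whole difficulty lies in showing the second sum is $o(x)$. Here I would approximate $\psi$ by a truncated Fourier polynomial of length $H$ (Vaaler's approximation), reducing matters to nontrivial bounds for the exponential sums over primes
\begin{equation*}
\sum_{m}\Lambda(m)\,e\big(h\phi(m)\big),\qquad 1\le|h|\le H.
\end{equation*}
Since $\phi(m)=m^{\gamma}v(m)$ with a slowly varying factor $v$ arising from $\tan^{-\theta\gamma}$, these behave like the sums in the original Piatetski--Shapiro problem. Applying Vaughan's identity to decompose $\Lambda$ into Type~I and Type~II bilinear sums and estimating each by van der Corput's method, the constraint $1<c<\tfrac{12}{11}$ (equivalently $\gamma>\tfrac{11}{12}$) is precisely what makes every resulting sum admit a power saving, giving $o(x)$.

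The two places where I expect real work are, first and foremost, this exponential-sum estimate over primes --- the genuine obstacle and the source of the admissible range of $c$ --- and, second, the preliminary lemma that the \emph{implicitly} defined inverse $\phi$ shares the analytic behaviour of $m^{\gamma}$: one must show that the successive derivatives of $\phi$ match those of $m^{\gamma}$ up to slowly varying corrections coming from $\tan^{\theta}(\log n)$ on $[\Delta_1,\Delta_2)$, so that the standard exponent-sum machinery applies verbatim and the boundedness of the tangent factor prevents any loss.
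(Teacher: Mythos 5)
Your overall architecture is the one the paper uses: split the count into a main term governed by the inverse function $\phi=f^{-1}$ (mean value theorem plus the prime number theorem gives $\gg x$, since $\phi'(m)\asymp m^{\gamma-1}$ and the tangent factor is trapped in $[1,2]$ on $[\Delta_1,\Delta_2)$) and a $\psi$-error term, then attack the error term with Vaaler's approximation, a combinatorial identity for $\Lambda$, and van der Corput bounds for the resulting Type~I and Type~II bilinear sums. Your entry point is cosmetically different --- you weight by $\Lambda([f(n)])$ at the outset and discard prime powers by injectivity of $n\mapsto[f(n)]$, whereas the paper sums over primes $p$, writes the count of $n$ as $(m''_p-m'_p)+(\psi(-m''_p)-\psi(-m'_p))$, and only later introduces $\Lambda$ via Abel summation --- but after the dust settles both reduce to the same object $\sum_m\Lambda(m)\big(\psi(-\phi(m+1))-\psi(-\phi(m))\big)$, and the exploitation of the difference structure (which supplies the crucial extra factor $\ll|h|m^{\gamma-1}$ before the exponential sums over primes are estimated) is present in both.

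The one genuine gap is your choice of Vaughan's identity. The paper uses Heath-Brown's identity, and this is load-bearing, not a matter of taste. The Type~I estimate (third-derivative test on the smooth variable) requires the smooth variable to have length $L\gg N^{1/2}$, while the Type~II estimate (Cauchy plus Weyl--van der Corput differencing plus the second-derivative test) requires the short variable to satisfy $L\ll N^{1/3}$; only then do both contributions balance at $N^{11/12+\varepsilon}$ after summing over $h\le M=N^{1-\gamma}\log^4N$, which is exactly where the condition $\gamma>\frac{11}{12}$, i.e.\ $c<\frac{12}{11}$, enters. Vaughan's identity cannot deliver this dichotomy: whatever parameters $U,V$ you choose, it produces either Type~I sums whose smooth variable can be as short as $N/(UV)$ or Type~II sums whose shorter variable ranges up to $N^{1/2}$, so some bilinear sums necessarily have their short variable in the window $(N^{1/3},N^{1/2}]$. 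For those sums the Type~II bound degenerates (the middle term $h^{1/4}L^{1/2}N^{(\gamma+2)/4}$ with $L=N^{1/2}$ leads, after the $h$-summation, to an exponent $1>\gamma$) and the Type~I bound fares no better, so your claim that ``every resulting sum admits a power saving'' under $c<\frac{12}{11}$ fails in that window. Heath-Brown's identity, with its parameters $U=2^3$, $V=2^7N^{1/3}$, $Z=2^{-10}N^{1/2}$, is precisely the device that guarantees every sum is either Type~I with $L\ge Z$ or Type~II with $L\le V$, closing the gap; you should replace Vaughan by it (or supply a separate estimate for bilinear sums with short variable in $(N^{1/3},N^{1/2}]$).
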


\section{Lemmas}
\indent

\begin{lemma}\label{Squareoutlemma}
Let $I$ be a subinterval of $(X, 2X]$.   For any complex numbers $z(n)$ we have
\begin{equation*}
\bigg|\sum_{n\in I}z(n)\bigg|^2
\leq\bigg(1+\frac{X}{Q}\bigg)\sum_{|q|< Q}\bigg(1-\frac{|q|}{Q}\bigg)
\sum_{n,\, n+q\,\in I}z(n+q)\overline{z(n)}\,,
\end{equation*}
where $Q$ is any positive integer.
\end{lemma}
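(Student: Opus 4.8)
The plan is to establish this inequality by a classical ``square-out'' device: average the sum over shifted windows of length $Q$, apply the Cauchy--Schwarz inequality once, and then reorganize the resulting diagonal and off-diagonal contributions. No analytic input is required here; the whole argument is elementary counting together with a single use of Cauchy--Schwarz, and the positivity/integrality of $Q$ is used only so that the window-counting identities hold exactly.

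First I would exploit the elementary fact that every integer $n$ lies in exactly $Q$ of the half-open windows $(m,m+Q]$ as $m$ runs over $\mathbb{Z}$, namely for $m\in\{n-Q,\dots,n-1\}$. This gives the exact identity
\[
Q\sum_{n\in I}z(n)=\sum_{m\in\mathbb{Z}}\ \sum_{n\in I,\ m<n\le m+Q}z(n).
\]
Since $I\subseteq(X,2X]$, the window $(m,m+Q]$ can meet $I$ only when $X-Q<m<2X$, so at most $X+Q$ values of $m$ yield a nonempty inner sum. This count is what will ultimately produce the factor $1+X/Q$.

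Next I would apply Cauchy--Schwarz to the $m$-sum, using that only at most $X+Q$ terms are nonzero, to obtain
\[
Q^2\Bigl|\sum_{n\in I}z(n)\Bigr|^2\le (X+Q)\sum_{m}\Bigl|\sum_{n\in I,\ m<n\le m+Q}z(n)\Bigr|^2.
\]
I would then expand the inner modulus as $\sum_{n_1,n_2}z(n_1)\overline{z(n_2)}$, with $n_1,n_2$ ranging over $I\cap(m,m+Q]$, and interchange the order of summation so that the $m$-sum becomes an inner count. For fixed $n_1,n_2$ the relevant $m$ are exactly those with $\max(n_1,n_2)-Q\le m<\min(n_1,n_2)$, and the number of such integers equals $\max\bigl(Q-|n_1-n_2|,\,0\bigr)$.

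The step I expect to be the crux — combinatorial rather than deep — is precisely this overlap count, since pinning down the weight $Q-|q|$ and the range $|q|<Q$ is exactly what yields the stated factor $1-|q|/Q$. Writing $q=n_1-n_2$ and $n=n_2$, the $m$-summed double sum collapses to $\sum_{|q|<Q}(Q-|q|)\sum_{n,\,n+q\in I}z(n+q)\overline{z(n)}$, and factoring $Q-|q|=Q(1-|q|/Q)$ rewrites this as $Q\sum_{|q|<Q}(1-|q|/Q)\sum_{n,\,n+q\in I}z(n+q)\overline{z(n)}$. Substituting this back into the Cauchy--Schwarz bound, dividing both sides by $Q^2$, and using $(X+Q)/Q=1+X/Q$ gives exactly the claimed inequality.
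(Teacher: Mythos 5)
Your proof is correct and complete: the window-averaging identity, the count of at most $X+Q$ relevant shifts $m$, the single application of Cauchy--Schwarz, and the overlap count $\max(Q-|q|,0)$ are all verified accurately and assemble into exactly the stated bound. The paper itself only cites Heath-Brown (Lemma 5), and the proof there is this same classical Weyl--van der Corput argument, so you have essentially reproduced the intended proof.
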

\begin{proof}
See (\cite{Heath2}, Lemma 5).
\end{proof}

\begin{lemma}\label{GrahamandKolesnik}
Let $k \geq0$ be an integer.
Suppose that $f(t)$ has $k+2$ continuous derivatives on $I$, and that $I \subseteq(N,2N]$.
Assume also that there is some constant $F$ such that
\begin{equation}\label{frFNR}
|f^{(r)}(t)|\asymp F N^{-r}
\end{equation}
for $r = 1, \ldots, k + 2$. Let $Q = 2^k$. Then
\begin{equation*}
\bigg|\sum_{n\in I}e(f(n))\bigg|\ll F^{\frac{1}{4Q-2}} N^{1-\frac{k+2}{4Q-2}}  +F^{-1} N\,.
\end{equation*}
The implied constant depends only upon the implied constants in \eqref{frFNR}.
\end{lemma}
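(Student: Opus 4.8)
The plan is to prove the lemma by induction on $k$, realizing it as the $(k+2)$-th derivative test of van der Corput's method, with the differencing step (the so-called $A$-process) supplied by Lemma \ref{Squareoutlemma}. Writing $S=\sum_{n\in I}e(f(n))$, the inductive statement $H(k)$ is exactly the displayed bound, and I would reduce $H(k)$ to $H(k-1)$ by one application of Lemma \ref{Squareoutlemma}, the base case $H(0)$ being the classical first- and second-derivative tests.

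For the base case $k=0$ we have $Q=1$, $4Q-2=2$, and the conditions $|f'|\asymp FN^{-1}$, $|f''|\asymp FN^{-2}$; the goal is $S\ll F^{1/2}+F^{-1}N$. The van der Corput second-derivative test with $\lambda:=|f''|\asymp FN^{-2}$ gives $S\ll N\lambda^{1/2}+\lambda^{-1/2}=F^{1/2}+F^{-1/2}N$, while the first-derivative (Kusmin--Landau) test gives, where $f'$ is monotonic and bounded away from the integers, $S\ll 1/|f'|\asymp F^{-1}N$. Separating the regimes $F\ge N$ and $F\le N$ shows the term $F^{-1/2}N$ is absorbed into $F^{1/2}$ in the former, and the first-derivative bound $F^{-1}N$ governs the latter, so the combined bound is $F^{1/2}+F^{-1}N$ as required.

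For the inductive step I would apply Lemma \ref{Squareoutlemma} to $z(n)=e(f(n))$ with a shift parameter $Q_0\le N$ to be chosen. Setting $g_q(t):=f(t+q)-f(t)$, so that $z(n+q)\overline{z(n)}=e(g_q(n))$, the diagonal $q=0$ contributes $O(N)$ and $|S|^2\ll \frac{N}{Q_0}\big(N+\sum_{1\le q<Q_0}\bigl|\sum_n e(g_q(n))\bigr|\big)$. By the mean value theorem, for $q$ in the relevant range $g_q^{(r)}(t)=f^{(r)}(t+q)-f^{(r)}(t)\asymp qf^{(r+1)}\asymp (qF/N)N^{-r}$ for $r=1,\dots,k+1$, so $g_q$ satisfies the hypotheses of $H(k-1)$ with $F$ replaced by $G_q:=qF/N$ and with $Q'=2^{k-1}=Q/2$. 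Applying $H(k-1)$, summing the resulting main terms $G_q^{1/(4Q'-2)}N^{1-(k+1)/(4Q'-2)}$ over $q<Q_0$ (using $\sum_{q<Q_0}q^{1/(4Q'-2)}\asymp Q_0^{1+1/(4Q'-2)}$) and invoking the arithmetic identity $4Q-2=2(4Q'-2)+2$, I would balance the diagonal term $N$ against this off-diagonal contribution by choosing $Q_0\asymp (N^{k+2}/F)^{1/(4Q'-1)}$. Then $|S|^2\asymp N^2/Q_0=F^{1/(4Q'-1)}N^{2-(k+2)/(4Q'-1)}$, and since $2(4Q'-1)=4Q-2$, taking square roots reproduces exactly the first term $F^{1/(4Q-2)}N^{1-(k+2)/(4Q-2)}$.

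The main obstacle is the secondary term. If one bounded the trivial part of each inner sum by $G_q^{-1}N=N^2/(qF)$ and summed, one would obtain $(N^2/F)\log Q_0$, producing a spurious logarithm and, after the factor $\frac{N}{Q_0}$, a contribution not obviously of size $(F^{-1}N)^2$. To recover precisely $F^{-1}N$ I would \emph{not} iterate the trivial term through the induction; instead I would treat the low-oscillation regime by applying the first-derivative test directly to the original sum $S$, which yields $S\ll 1/|f'|\asymp F^{-1}N$ whenever $f'$ is monotonic and bounded away from the integers (the range $F\lesssim N$), and observe that in the complementary range $F\gtrsim N$ one has $F^{-1}N\le F^{1/(4Q-2)}N^{1-(k+2)/(4Q-2)}$, so the trivial term is already absorbed by the first. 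Making this case distinction uniform across the induction, and verifying that the optimal $Q_0$ can be taken to be a genuine positive integer lying in the admissible range, is the delicate bookkeeping on which the argument rests.
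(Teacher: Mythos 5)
Your overall skeleton is the right one, and indeed it is the standard one: the paper itself gives no proof of this lemma (it simply cites Graham--Kolesnik, Theorem 2.9), and the textbook proof is exactly your induction on $k$, with the $A$-process (Lemma \ref{Squareoutlemma}) reducing $H(k)$ to $H(k-1)$ and the base case supplied by the Kusmin--Landau and second-derivative tests. Your base case is correct (including the observation that the second-derivative term $F^{-1/2}N$ must be repaired by Kusmin--Landau when $F\lesssim N$), your computation $g_q^{(r)}\asymp (qF/N)N^{-r}$ is legitimate because $f^{(r+1)}$ has constant sign, and your arithmetic $2(4Q'-1)=4Q-2$ with $Q_0\asymp (N^{k+2}/F)^{1/(4Q'-1)}$ correctly balances the diagonal against the off-diagonal main terms.

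However, your proposed disposal of the secondary terms has a genuine gap. Your dichotomy handles only the term $F^{-1}N$ attached to the \emph{original} sum: Kusmin--Landau directly when $F\lesssim N$, absorption into the main term when $F\gtrsim N$. It does not dispose of the Kusmin--Landau terms of the \emph{differenced} sums: when $F\gtrsim N$ the parameters $F_q=qF/N$ for small $q$ can still satisfy $1\le F_q\le N$, so when you invoke $H(k-1)$ on $g_q$ the term $F_q^{-1}N$ cannot be dropped (for such $g_q$ it is genuinely needed), nor can those $q$ be bounded trivially by $N$ without losing everything. Summing gives $\frac{N}{Q_0}\sum_{q<Q_0}F_q^{-1}N\asymp \frac{N^3\log Q_0}{FQ_0}$, which exceeds the target $\frac{N^2}{Q_0}$ by the factor $\frac{N\log Q_0}{F}$; this factor is $\gg 1$ precisely in the strip $N\ll F\ll N\log N$, so "making the case distinction uniform across the induction," as you describe it, still leaves a $\sqrt{\log N}$ loss there, and the lemma as stated has no logarithms. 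The missing idea is a three-way split: for $cN\le F\le N^{\alpha_k}$ apply the induction hypothesis $H(k-1)$ \emph{directly to $S$, without differencing}. Writing $A=4Q'-2$, a short computation shows the $(k-1)$-level main term $F^{1/A}N^{1-(k+1)/A}$ is at most the $k$-level target $F^{1/(4Q-2)}N^{1-(k+2)/(4Q-2)}$ exactly when $F\le N^{\alpha_k}$ with $\alpha_k=\frac{kA+2(k+1)}{A+2}>1$ (for $k=1$ this is $F\le N^{3/2}$): the lower-order derivative test is \emph{stronger} than needed at small $F$. Reserving the differencing step for $F\ge N^{\alpha_k}$, the offending factor $\frac{N\log Q_0}{F}\ll N^{1-\alpha_k}\log N\ll 1$ and the harmonic-sum logarithm is genuinely absorbed. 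With that repair (plus the minor points you already flag: integrality and admissibility of $Q_0$, and the constant caveat in Kusmin--Landau near $\|f'\|\approx 0$ versus $|f'|\approx 1$, handled by splitting at $\delta N$ rather than $N$), your argument closes and coincides with the cited proof.
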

\begin{proof}
See (\cite{Graham-Kolesnik}, Theorem 2.9).
\end{proof}

\begin{lemma}\label{Heath-Brown} Let $G(n)$ be a complex valued function.
Assume further that
\begin{align*}
&P>2\,,\quad P_1\le 2P\,,\quad  2\le U<V\le Z\le P\,,\\
&U^2\le Z\,,\quad 128UZ^2\le P_1\,,\quad 2^{18}P_1\le V^3\,.
\end{align*}
Then the sum
\begin{equation*}
\sum\limits_{P<n\le P_1}\Lambda(n)G(n)
\end{equation*}
can be decomposed into $O\Big(\log^6P\Big)$ sums, each of which is either of Type I
\begin{equation*}
\mathop{\sum\limits_{M<m\le M_1}a(m)\sum\limits_{L<l\le L_1}}_{P<ml\le P_1}G(ml)
\end{equation*}
and
\begin{equation*}
\mathop{\sum\limits_{M<m\le M_1}a(m)\sum\limits_{L<l\le L_1}}_{P<ml\le P_1}G(ml)\log l\,,
\end{equation*}
where
\begin{equation*}
L\ge Z\,,\quad M_1\le 2M\,,\quad L_1\le 2L\,,\quad a(m)\ll \tau _5(m)\log P
\end{equation*}
or of Type II
\begin{equation*}
\mathop{\sum\limits_{M<m\le M_1}a(m)\sum\limits_{L<l\le L_1}}_{P<ml\le P_1}b(l)G(ml)
\end{equation*}
where
\begin{equation*}
U\le L\le V\,,\quad M_1\le 2M\,,\quad L_1\le 2L\,,\quad
a(m)\ll \tau _5(m)\log P\,,\quad b(l)\ll \tau _5(l)\log P\,.
\end{equation*}
\end{lemma}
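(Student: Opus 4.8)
The plan is to derive the decomposition from Heath-Brown's combinatorial identity for the von Mangoldt function, followed by a dyadic splitting of the variables and a regrouping argument in which the three numerical hypotheses $U^2\le Z$, $128UZ^2\le P_1$ and $2^{18}P_1\le V^3$ are exactly what is needed to force every resulting piece into Type I or Type II. Concretely, I take $k=3$ in Heath-Brown's identity: setting $z=(2P_1)^{1/3}$, for every $n\le P_1\le 2P\le z^3$ one has $\Lambda(n)=\sum_{j=1}^{3}(-1)^{j-1}\binom{3}{j}\sum \mu(m_1)\cdots\mu(m_j)\log n_1$, the inner sum being over $m_1\cdots m_j n_1\cdots n_j=n$ with each $m_i\le z$. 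Substituting this into $\sum_{P<n\le P_1}\Lambda(n)G(n)$ and interchanging the order of summation turns each of the $O(1)$ terms into a $2j$-fold sum of $G(m_1\cdots m_j n_1\cdots n_j)$ weighted by the $\mu(m_i)$ and by $\log n_1$.

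Next I would dyadically decompose each of the at most $2\cdot 3=6$ variables, writing each in a range $(N_i,2N_i]$. This produces $O(\log^6 P)$ sums, which accounts for the claimed number of pieces; in every piece $\prod_i N_i\asymp P_1$ and the Möbius variables (at most three) are confined to $N_i\le z=(2P_1)^{1/3}$. I then regroup these variables into two nonempty blocks, a coefficient block $m$ of product $M$ and a block $l$ of product $L$. The coefficient attached to a merged block is the Dirichlet convolution of the coefficients of its constituents; since each of the two nonempty blocks receives at most five variables, this gives $a(m)\ll\tau_5(m)\log P$ and, when the $l$-block also carries coefficients, $b(l)\ll\tau_5(l)\log P$, the factor $\log n_1\le\log P$ being either displayed explicitly as $\log l$ (precisely when $n_1$ is chosen as the $l$-variable) or absorbed into the divisor-function bound.

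The heart of the argument is the regrouping dichotomy. If some \emph{free} variable $n_i$ has $N_i\ge Z$, I set $l=n_i$ and let $m$ collect the other variables: because $l$ then runs over a full interval with trivial (or $\log$) weight, this is a Type I sum, in the plain form unless $i=1$, in which case it is the $\log l$ form. Otherwise every free variable satisfies $N_i<Z$, and I must assemble a Type II window $L\in[U,V]$ from the dyadic lengths. Here the hypotheses enter: $2^{18}P_1\le V^3$ gives $(2P_1)^{1/3}\le V$, so no single Möbius length can by itself overshoot $V$, while $U^2\le Z$ and $128UZ^2\le P_1$ control the sizes of the remaining (free, hence $<Z$) lengths. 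The mechanism is a greedy accumulation: since the empty product is $1<U$ and the full product is $\asymp P_1>V$, the running product first enters $[U,\infty)$ at some step, and the hypotheses bound the crossing value below $V$, yielding $L\in[U,V]$ and a Type II sum with both coefficients $\tau_5$-bounded. It is worth noting that the terms $j=1$ are automatically Type I: there $m_1\le(2P_1)^{1/3}$ forces $n_1=n/m_1\gg P^{2/3}\gg Z$, so the single free variable is long.

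The step I expect to be the main obstacle is exactly this combinatorial feasibility for $j=2$ and $j=3$, where the free variables may all be short. One must show that, failing a free length $\ge Z$, the mixed collection of Möbius lengths (each $\le(2P_1)^{1/3}$) together with the short free lengths can always be partitioned so that one part multiplies into the window $[U,V]$; the delicate configurations are those with a free length lying awkwardly between $V$ and $Z$, which is forced into the coefficient block $m$, after which one must verify that the residual lengths still assemble a product in $[U,V]$ without skipping over it. Carrying out this case analysis---restricting the greedy to lengths that cannot individually overshoot the window and counting how many lengths exceed $V/U$---while simultaneously preserving the outer constraint $P<ml\le P_1$ throughout, is where the three numerical hypotheses are consumed and where the real work of the proof lies.
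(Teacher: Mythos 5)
Your overall route is the right one---the paper itself offers no argument at all for this lemma, merely citing Heath-Brown's 1982 paper \emph{Prime numbers in short intervals and a generalized Vaughan identity}, and what you sketch (the identity with $k=3$, $z=(2P_1)^{1/3}$, dyadic splitting into $O(\log^6 P)$ pieces, then regrouping the at most six dyadic blocks into an $m$-block and an $l$-block with $\tau_5$-bounded convolution coefficients) is precisely the skeleton of Heath-Brown's own proof. The $j=1$ observation, the $\tau_5$ bound from merging at most five variables, and the absorption or display of the $\log n_1$ factor are all correct.

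However, as a proof the proposal has a genuine gap, and you name it yourself: the combinatorial dichotomy---if no free variable has dyadic length $\ge Z$, then some subcollection of the lengths multiplies into the window $[U,V]$---is asserted, not proved, and your ``greedy accumulation'' mechanism does not establish it as stated. If the running product is $\Pi<U$ and the next factor is $f$, you get $\Pi f<Uf$, which lies below $V$ only when $f\le V/U$; but nothing you have said excludes a crossing factor with $f>V/U$. Even for a M\"obius variable one only has $f\le z\le 2^{-17/3}V$ (from $2^{18}P_1\le V^3$), and $Uz\le V$ fails as soon as $U>2^{17/3}$, so the jump over the window is a real possibility, not a corner case. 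Heath-Brown's actual argument resolves exactly this by a case analysis in which the hypotheses are consumed in specific inequalities---for instance, two factors each below $U$ have product below $U^2\le Z$ and so may be fused and still treated as ``short,'' while $128UZ^2\le P_1$ guarantees enough total mass remains to reach the window at all. Your proposal invokes all three hypotheses by name but never deploys any of them in a checkable inequality, and you explicitly defer ``the real work of the proof'' at precisely this point. Until that case analysis is carried out (or the lemma is simply cited, as the paper does), the proposal is an accurate road map rather than a proof.
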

\begin{proof}
See (\cite{Heath1}).
\end{proof}

\begin{lemma}\label{Vaaler}
For every $M\geq2$, we have
\begin{equation*}
\psi(t)=\sum\limits_{1\leq|h|\leq M}a(h)e(ht)
+\mathcal{O}\Bigg(\sum\limits_{|h|\leq M}b(h)e(ht)\Bigg)\,,\quad
a(h)\ll1/|h|\,,\quad b(h)\ll1/M\,.
\end{equation*}
\end{lemma}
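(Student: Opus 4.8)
The plan is to approximate the $1$-periodic sawtooth $\psi(t)=\{t\}-1/2$ by a trigonometric polynomial of degree $M$ whose error is dominated \emph{pointwise} by a nonnegative trigonometric polynomial whose Fourier coefficients have size $O(1/M)$. This is precisely the shape of the asserted identity: the approximating polynomial supplies the main sum $\sum_{1\le|h|\le M}a(h)e(ht)$, while the dominating kernel supplies the error $\mathcal{O}\big(\sum_{|h|\le M}b(h)e(ht)\big)$. First I would recall the conditionally convergent Fourier expansion $\psi(t)=\sum_{h\neq0}\frac{-1}{2\pi i h}e(ht)$, whose coefficients already obey $|\widehat{\psi}(h)|=\frac{1}{2\pi|h|}$. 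A crude truncation at $|h|\le M$ is not good enough, however, because its error is of order $1/(M\,d(t))$, where $d(t)$ is the distance from $t$ to the nearest integer, and this blows up near the integers, so it cannot be majorised by a nonnegative polynomial of bounded mean.

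The remedy, due to Vaaler and resting on the extremal functions of Beurling and Selberg, is to replace the sharp cut-off by optimal \emph{one-sided} approximations. Concretely, I would take Beurling's entire majorant and minorant $B^{\pm}(z)$ of $\mathrm{sgn}(x)$, of exponential type $2\pi$, characterised by $B^{-}(x)\le\mathrm{sgn}(x)\le B^{+}(x)$ together with the minimal $L^1$ defect $\int_{\mathbb R}\big(B^{+}(x)-\mathrm{sgn}(x)\big)\,dx=1$; these are built by interpolating $\mathrm{sgn}$ and its derivative at the integers against $\big(\frac{\sin\pi z}{\pi}\big)^2$. Rescaling to exponential type $2\pi(M+1)$ and periodising by Poisson summation converts $B^{\pm}$ into $1$-periodic trigonometric polynomials $\psi^{\pm}(t)$ of degree $M$ that sandwich $\psi$, namely $\psi^{-}(t)\le\psi(t)\le\psi^{+}(t)$.

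The decisive structural fact I would then exploit is that the gap between the two envelopes is a positive multiple of the Fej\'er kernel $F_M(t)=\sum_{|h|\le M}\big(1-\frac{|h|}{M+1}\big)e(ht)\ge0$, say $\psi^{+}(t)-\psi^{-}(t)=\frac{c}{M+1}F_M(t)$ for an absolute constant $c>0$. Taking $\psi^{*}=\tfrac12(\psi^{+}+\psi^{-})$ as the main approximant and using $|\psi(t)-\psi^{*}(t)|\le\tfrac12\big(\psi^{+}(t)-\psi^{-}(t)\big)$ produces the error term, whose coefficients $b(h)=\frac{c}{2(M+1)}\big(1-\frac{|h|}{M+1}\big)\le\frac{c}{2(M+1)}\ll1/M$ obey the required bound. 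The coefficients of $\psi^{*}$ itself are those of $\psi$ weighted by bounded smoothing factors $w(h)$ (with $|w(h)|\le1$), namely the values at $h/(M+1)$ of the Fourier transform of the underlying kernel, so $|a(h)|=|w(h)|\,|\widehat{\psi}(h)|\le\frac{1}{2\pi|h|}\ll1/|h|$, which is the stated bound on $a(h)$.

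I expect the genuinely hard part to be the construction and verification of the Beurling--Selberg extremal functions themselves: showing they are entire of the prescribed exponential type, that the one-sided inequalities hold \emph{everywhere} on $\mathbb R$ rather than merely at the interpolation nodes, and that the $L^1$ defect is exactly $1$ so that periodisation yields the Fej\'er kernel with the clean coefficients above. This is the classical content of the theory of extremal entire functions of exponential type, and it is what forces the error kernel to be nonnegative with the sharp mass $O(1/M)$; in the write-up this step may legitimately be invoked by citing Vaaler's construction directly rather than reproved from scratch.
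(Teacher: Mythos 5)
Your proposal is correct and matches the paper's treatment: the paper offers no argument of its own for this lemma, simply citing Vaaler's theorem on the extremal trigonometric approximation of $\psi(t)=\{t\}-1/2$, and your sketch is a faithful outline of exactly that cited construction (Beurling--Selberg one-sided approximants, periodisation, and the Fej\'er-kernel majorant giving $b(h)\ll 1/M$ and $a(h)\ll 1/|h|$). Since you explicitly defer the verification of the extremal functions to Vaaler's paper, your write-up would in effect reduce to the same citation the author uses, which is entirely appropriate here.
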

\begin{proof}
See \cite{Vaaler}.
\end{proof}

\section{Proof of the Theorem}
\indent

We have that
\begin{equation*}
[n^c \tan^\theta(\log n)]=p
\end{equation*}
if and only if
\begin{equation}\label{pncp+1}
p\leq n^c \tan^\theta(\log n)<p+1\,.
\end{equation}
From \eqref{Scx} and \eqref{pncp+1} we write
\begin{align}\label{ScxGammaSigma}
S_c(x)&=\sum\limits_{\Delta_1^c \tan^\theta(\log \Delta_1)-1<p< \Delta_2^c \tan^\theta(\log \Delta_2)}
\sum\limits_{\Delta_1\leq n<\Delta_2\atop{p\leq n^c \tan^\theta(\log n)<p+1}}1\nonumber\\
&=\sum\limits_{\Delta_1^c \tan^\theta(\log \Delta_1)-1<p< \Delta_2^c \tan^\theta(\log \Delta_2)}
\sum\limits_{m'_p\leq n<m''_p}1+\mathcal{O}(1)\nonumber\\
&=\sum\limits_{\Delta_1^c \tan^\theta(\log \Delta_1)-1<p<\Delta_2^c \tan^\theta(\log \Delta_2)}
\Big([-m'_p]-[-m''_p]\Big)+\mathcal{O}(1)\nonumber\\
&=\Gamma+\Sigma+\mathcal{O}(1)\,,
\end{align}
where
\begin{equation}\label{subset}
\big[m'_p, m''_p\big)\subset[\Delta_1, \Delta_2)
\end{equation}
and the interval $\big[m'_p, m''_p\big)$ is a solution of the system inequalities
\begin{equation}\label{System1}
\left|\begin{array}{cc}
n^c \tan^\theta(\log n)\geq p \quad\;\; \\
n^c \tan^\theta(\log n)<p+1
\end{array}\right.
\end{equation}
and  where
\begin{align}
\label{Gamma}
&\Gamma=\sum\limits_{\Delta_1^c \tan^\theta(\log \Delta_1)-1<p<\Delta_2^c \tan^\theta(\log \Delta_2)}
\big(m''_p-m'_p\big)\,,\\
\label{Sigma}
&\Sigma=\sum\limits_{\Delta_1^c \tan^\theta(\log \Delta_1)-1<p<\Delta_2^c \tan^\theta(\log \Delta_2)}
\big(\psi(-m''_p)-\psi(-m'_p)\big)\,.
\end{align}

\subsection{Estimation of $\mathbf{\Gamma}$}

Consider the function $t(y)$ defined by
\begin{equation}\label{Implicitfunction1}
t=y^c\tan^\theta(\log y)\,,
\end{equation}
for
\begin{equation}\label{yy'y''}
y\in\big[m'_p, m''_p\big]\,.
\end{equation}
The first derivative of $y$ as implicit function of $t$ is
\begin{equation}\label{Firstderivative}
y'=\frac{y^{1-c}}{\big(c\tan(\log y)+\theta\sec^2(\log y)\big)\tan^{\theta-1}(\log y)}\,.
\end{equation}
By \eqref{gamma}, \eqref{Delta1},  \eqref{Delta2},  \eqref{subset}, \eqref{System1},
\eqref{Gamma}, \eqref{Implicitfunction1}, \eqref{yy'y''} and the mean-value theorem  we obtain
\begin{equation}\label{Gammaest1}
\Gamma\gg\sum\limits_{\Delta_1^c \tan^\theta(\log \Delta_1)-1<p<\Delta_2^c \tan^\theta(\log \Delta_2)}y'\big(\xi_p\big)\,,
\end{equation}
where
\begin{equation}\label{System2}
\left|\begin{array}{cc}
p<\xi_p< p +1 \\
y(\xi_p)\asymp \xi_p^\gamma \quad  \quad
\end{array}\right..
\end{equation}
Using \eqref{gamma}, \eqref{Delta1},  \eqref{Delta2},  \eqref{subset}, \eqref{yy'y''}, \eqref{Firstderivative},
\eqref{Gammaest1}, \eqref{System2} and the prime number theorem we deduce
\begin{equation}\label{Gammaest2}
\Gamma\gg\sum\limits_{\Delta_1^c \tan^\theta(\log \Delta_1)-1<p<\Delta_2^c \tan^\theta(\log \Delta_2)} p^{\gamma-1}\gg \frac{x}{\log x}\,.
\end{equation}

\subsection{Estimation of $\mathbf{\Sigma}$}

Using \eqref{Sigma} and  Abel's summation formula in a standard way we get
\begin{equation}\label{Sigmaest1}
\Sigma\ll \frac{x}{\log^2x}+\max_{\Delta_1^c \tan^\theta(\log \Delta_1)-1 \leq t\leq \Delta_2^c \tan^\theta(\log \Delta_2)} |\Sigma_1(t)|\,,
\end{equation}
where
\begin{equation}\label{Sigma1}
\Sigma_1(t)=\sum\limits_{\Delta_1^c \tan^\theta(\log \Delta_1)-1<n\leq t}\Lambda(n) \Psi(n)
\end{equation}
and where
\begin{equation}\label{Psipsi}
\Psi(n)=\psi(-m''_n)-\psi(-m'_n)\,.
\end{equation}
Splitting the range of $n$ into dyadic subintervals from \eqref{Sigma1} we obtain
\begin{equation}\label{Sigma1est1}
\Sigma_1(t)\ll (\log X)|\Sigma_2(N)|\,,
\end{equation}
where
\begin{equation}\label{Sigma2}
\Sigma_2(N)=\sum\limits_{N<n\leq 2N}\Lambda(n) \Psi(n)
\end{equation}
and where
\begin{equation}\label{Nlimits}
\Delta_1^c \tan^\theta(\log \Delta_1)-1\leq N\leq\frac{t}{2}\,.
\end{equation}
Having in mind \eqref{Sigmaest1}, \eqref{Sigma1est1} and \eqref{Nlimits} we deduce
\begin{equation}\label{Sigmaest2}
\Sigma\ll \frac{x}{\log^2x}+(\log x)\max_{\Delta_1^c \tan^\theta(\log \Delta_1)-1
\leq N\leq \frac{1}{2}\Delta_2^c \tan^\theta(\log \Delta_2)} |\Sigma_2(N)|\,.
\end{equation}
Hence fort we shall use that
\begin{equation}\label{NXc}
\left|\begin{array}{cc}
N\asymp x^c \hspace{68mm}\\
\Delta_1^c \tan^\theta(\log \Delta_1)-1\leq N\leq \frac{1}{2}\Delta_2^c \tan^\theta(\log \Delta_2)
\end{array}\right..
\end{equation}
It remains to find a non-trivial estimate for $\Sigma_2(N)$.

Let $M\geq2$ is a real parameter, we shall choose latter depending on $N$.
From \eqref{Psipsi} and  Lemma \ref{Vaaler} it follows
\begin{equation}\label{Psisum}
\Psi(n)=\sum\limits_{1\leq|h|\leq M}a(h)\omega(n,h)
+\mathcal{O}\big(\omega_1(n)\big)+\mathcal{O}\big(\omega_2(n)\big)\,,
\end{equation}
where
\begin{align}
\label{omega}
&\omega(n,h)=e(-hm''_n)-e(-hm'_n)\,,\\
\label{omega1}
&\omega_1(n)=\sum\limits_{|h|\leq M}b(h)e(-hm'_n)\,,\\
\label{omega2}
&\omega_2(n)=\sum\limits_{|h|\leq M}b(h)e(-hm''_n)\,,\\
\label{ahbh}
&a(h)\ll1/|h|\,,\quad b(h)\ll1/M\,.
\end{align}
Now \eqref{Sigma2}, \eqref{Psisum}, \eqref{omega1} and \eqref{omega2}  and yields
\begin{equation}\label{Sigma2est1}
\Sigma_2(N)=\sum\limits_{1\leq|h|\leq M}a(h)\sum\limits_{N<n\leq 2N}\Lambda(n)\omega(n,h)
+\mathcal{O}\big(\Omega_1\log N\big)+\mathcal{O}\big(\Omega_2\log N\big)\,,
\end{equation}
where
\begin{align}
\label{Omega1}
&\Omega_1=\sum\limits_{N<n\leq 2N}\sum\limits_{|h|\leq M}b(h)e(-hm'_n)\,,\\
\label{Omega2}
&\Omega_2=\sum\limits_{N<n\leq 2N}\sum\limits_{|h|\leq M}b(h)e(-hm''_n)\,.
\end{align}
We first estimate $\Omega_1$.
According to \eqref{subset}, \eqref{System1}, \eqref{NXc} and \eqref{Omega1} the variable $m'_n$ is an implicit function of $n$ defined by
\begin{equation}\label{Implicitfunction2}
y^c\tan^\theta(\log y)=n \quad \mbox{ for } \quad  n\in (N, 2N]
\end{equation}
and
\begin{equation}\label{yDelta1Delta2}
y\subset[\Delta_1, \Delta_2)\,.
\end{equation}
Taking into account  \eqref{gamma}, \eqref{Delta1},  \eqref{Delta2}, \eqref{Implicitfunction1}, \eqref{Firstderivative}, \eqref{NXc},
\eqref{Implicitfunction2} and \eqref{yDelta1Delta2} we find
\begin{equation}\label{firstderivativenasymp}
y'\asymp  N^{\gamma-1}\,.
\end{equation}
Proceeding in the same way we get
\begin{equation}\label{secondderivativenasymp}
y''\asymp  N^{\gamma-2}\,.
\end{equation}
Now \eqref{NXc}, \eqref{ahbh}, \eqref{Omega1}, \eqref{firstderivativenasymp}, \eqref{secondderivativenasymp}
and Lemma \ref{GrahamandKolesnik} with  $k=0$  imply
\begin{align}\label{Omega1est1}
\Omega_1&\ll \frac{1}{M}\sum\limits_{|h|\leq M}\bigg|\sum\limits_{N<n\leq 2N} e(-hm'_n)\bigg|\nonumber\\
&\ll \frac{N}{M}+\frac{1}{M}\sum\limits_{1\leq h\leq M}\bigg|\sum\limits_{N<n\leq 2N} e(-hm'_n)\bigg|\nonumber\\
&\ll \frac{N}{M}+\frac{1}{M}\sum\limits_{1\leq h\leq M}\Big( h^\frac{1}{2}N^\frac{\gamma}{2} +  h^{-1}N^{1-\gamma} \Big)\nonumber\\
&\ll NM^{-1}+  N^\frac{\gamma}{2}M^\frac{1}{2} + N^{1-\gamma} M^{-1} \log M\,.
\end{align}
Take
\begin{equation}\label{M}
M=N^{1-\gamma} \log^4 N\,.
\end{equation}
By \eqref{Omega1est1} and \eqref{M} we obtain
\begin{equation*}
\Omega_1\ll \frac{N^\gamma}{\log^4 N}+ N^\frac{1}{2}\log^2 N\,.
\end{equation*}
The last estimate and \eqref{NXc} give us
\begin{equation}\label{Omega1est2}
\Omega_1\ll \frac{x}{\log^4 x}+ x^\frac{c}{2}\log^2 x\ll  \frac{x}{\log^4 x}\,.
\end{equation}
Arguing in the same way we get
\begin{equation}\label{Omega2est1}
\Omega_2\ll\frac{x}{\log^4 x}\,.
\end{equation}
From \eqref{NXc}, \eqref{Sigma2est1}, \eqref{Omega1est2} and  \eqref{Omega2est1} it follows
\begin{equation}\label{Sigma2est2}
\Sigma_2(N)\ll|\Sigma_3(N)|+\frac{x}{\log^3 x}\,,
\end{equation}
where
\begin{equation}\label{Sigma3}
\Sigma_3(N)=\sum\limits_{1\leq|h|\leq M}a(h)\sum\limits_{N<n\leq 2N}\Lambda(n)\omega(n,h)\,.
\end{equation}
By \eqref{omega} we have
\begin{equation}\label{omegaPhi}
\omega(n,h)=\Phi_h(n)e(-hm'_n)\,,
\end{equation}
where
\begin{equation}\label{Phi}
\Phi_h(n)=e\big(h(m'_n-m''_n)\big)-1\,.
\end{equation}
Using \eqref{omegaPhi}, \eqref{Phi} and  Abel's summation formula we find
\begin{align}\label{Lambdaomega}
\sum\limits_{N<n\leq 2N}\Lambda(n)\omega(n,h)&=\Phi_h(2N)\sum\limits_{N<n\leq 2N}\Lambda(n)e(-hm'_n)\nonumber\\
&-\int\limits_{N}^{2N}\Phi'_h(t)\sum\limits_{N<n\leq t}\Lambda(n)e(-hm'_n)\,dt\nonumber\\
&\ll\Bigg(|\Phi_h(2N)|+\int\limits_{N}^{2N}|\Phi'_h(t)|\,dt\Bigg)
\max\limits_{N_1\in[N, 2N]}|\Sigma_4(N, N_1)|\,,
\end{align}
where
\begin{equation}\label{Sigma4}
\Sigma_4(N, N_1)=\sum\limits_{N<n\leq N_1}\Lambda(n)e(hm'_n)\,.
\end{equation}
Consider $\Phi_h(2N)$. By \eqref{subset}, \eqref{System1}, \eqref{yy'y''}, \eqref{Firstderivative},
\eqref{NXc}, \eqref{Implicitfunction2}, \eqref{yDelta1Delta2}, \eqref{Phi} and the mean-value theorem we get
\begin{equation}\label{Phiest1}
|\Phi_h(2N)|\leq2\big|\sin\big(\pi h(m'_n-m''_n)\big)\big|\ll
|h|\big|m'_n-m''_n\big|\ll|h|N^{\gamma-1}\,.
\end{equation}
On the other hand \eqref{secondderivativenasymp}   and  \eqref{Phi} yields
\begin{equation}\label{Phiest2}
\Phi'_h(t)\ll|h|N^{\gamma-2}  \quad\mbox{ for }\quad t\in[N,2N] \,.
\end{equation}
Bearing in mind \eqref{Lambdaomega} -- \eqref{Phiest2} we obtain
\begin{equation}\label{Lambdaomegaest}
\sum\limits_{N<n\leq 2N}\Lambda(n)\omega(n,h)\ll|h|N^{\gamma-1}\max\limits_{N_1\in[N, 2N]}|\Sigma_4(N, N_1)|\,.
\end{equation}
Thus from \eqref{Sigmaest2}, \eqref{ahbh}, \eqref{Sigma2est2}, \eqref{Sigma3} and \eqref{Lambdaomegaest} it follows
\begin{equation}\label{Sigmaest3}
\Sigma\ll (\log x)\max_{\Delta_1^c \tan^\theta(\log \Delta_1)-1
\leq N\leq \frac{1}{2}\Delta_2^c \tan^\theta(\log \Delta_2)}\Big(N^{\gamma-1} \Sigma_5(N, N_1)\Big)+\frac{x}{\log^2x}\,,
\end{equation}
where
\begin{equation}\label{Sigma5}
\Sigma_5(N, N_1)=\sum\limits_{1\leq h\leq M}\max\limits_{N_1\in[N, 2N]}|\Sigma_4(N, N_1)|\,.
\end{equation}
It remains to apply Heath-Brown's identity to the sum  $\Sigma_4(N, N_1)$ defined by \eqref{Sigma4}.

\begin{lemma}\label{SIest}
Set
\begin{equation}\label{SI}
S_I=\mathop{\sum\limits_{D<d\le D_1}a(d)\sum\limits_{L<l\le L_1}}_{N<dl\le N_1}e\big(hm'_{dl}\big)
\end{equation} and
\begin{equation}\label{SI'}
S'_I=\mathop{\sum\limits_{D<d\le D_1}a(d)\sum\limits_{L<l\le L_1}}_{N<dl\le N_1}e\big(hm'_{dl}\big)\log l\,,
\end{equation}
where
\begin{equation}\label{Conditions1}
L\ge 2^{-10}N^{\frac{1}{2}}\,,\quad D_1\le 2D\,,\quad L_1\le 2L\,,\quad N_1\le 2N\,,\quad a(d)\ll \tau _5(d)\log N\,.
\end{equation}
Then
\begin{equation*}
S_I,\, S'_I\ll h^{\frac{1}{6}}N^{\frac{2\gamma+9}{12}+\varepsilon} \,.
\end{equation*}
\end{lemma}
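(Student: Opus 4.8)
\section*{Proof proposal}

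The plan is to treat this as a classical Type I estimate: fix the outer variable $d$ and estimate the inner exponential sum over $l$ by the van der Corput--type bound of Lemma \ref{GrahamandKolesnik}. Since $a(d)\ll\tau_5(d)\log N$ by \eqref{Conditions1}, I would first pass to absolute values to get
\begin{equation*}
S_I\ll(\log N)\sum\limits_{D<d\le D_1}\tau_5(d)\bigg|\sum\limits_{L<l\le L_1\atop{N<dl\le N_1}}e\big(hm'_{dl}\big)\bigg|\,,
\end{equation*}
so that matters reduce to estimating, for each fixed $d$, the sum $\sum_l e(f(l))$ with phase $f(l)=hm'_{dl}$ over a subinterval of $(L,2L]$. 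Here the constraints $D_1\le2D$, $L_1\le2L$ and $N<dl\le N_1\le2N$ force $d\asymp D\asymp N/L$ and $l\asymp L$, which I will use throughout.

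The key analytic input is the size of the derivatives of the phase. Writing $y=y(n)=m'_n$ for the implicit function of \eqref{Implicitfunction2}, I would differentiate its defining relation once more, exactly as in the derivation of \eqref{firstderivativenasymp} and \eqref{secondderivativenasymp}, to obtain $y'''\asymp N^{\gamma-3}$; the point is that on $[\Delta_1,\Delta_2)$ one has $\tan(\log y)\asymp1$ and $\sec^2(\log y)\asymp1$ by the very choice \eqref{Delta1}, \eqref{Delta2} of the endpoints, so the slowly varying tangent factor contributes only bounded constants and every derivative of $m'_n$ behaves like a pure power $n^{\gamma-r}$. Applying the chain rule to $f(l)=hm'_{dl}=h\,y(dl)$ then gives, for $r=1,2,3$,
\begin{equation*}
f^{(r)}(l)=h\,d^{\,r}\,y^{(r)}(dl)\asymp h\,d^{\,r}(dl)^{\gamma-r}\asymp h\,d^{\,\gamma}l^{\gamma-r}\asymp h N^{\gamma}L^{-r}\,,
\end{equation*}
so that hypothesis \eqref{frFNR} of Lemma \ref{GrahamandKolesnik} holds with the lemma's $N$ replaced by $L$ and with $F\asymp hN^{\gamma}$.

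With these estimates in hand I would apply Lemma \ref{GrahamandKolesnik} with $k=1$, so that $Q=2$ and $4Q-2=6$, giving for the inner sum
\begin{equation*}
\bigg|\sum\limits_{l}e\big(hm'_{dl}\big)\bigg|\ll F^{1/6}L^{1/2}+F^{-1}L\ll h^{1/6}N^{\gamma/6}L^{1/2}+h^{-1}N^{-\gamma}L\,.
\end{equation*}
Summing over $d$ via $\sum_{D<d\le D_1}\tau_5(d)\ll DN^{\varepsilon}\asymp(N/L)N^{\varepsilon}$ yields
\begin{equation*}
S_I\ll N^{\varepsilon}\Big(h^{1/6}N^{1+\gamma/6}L^{-1/2}+h^{-1}N^{1-\gamma}\Big)\,.
\end{equation*}
Finally I would insert the lower bound $L\ge2^{-10}N^{1/2}$ from \eqref{Conditions1}, which gives $L^{-1/2}\ll N^{-1/4}$ and turns the first term into $h^{1/6}N^{3/4+\gamma/6+\varepsilon}=h^{1/6}N^{(2\gamma+9)/12+\varepsilon}$; the second term is $\ll N^{1-\gamma+\varepsilon}$, and since $\gamma=1/c>11/12>3/14$ one checks $1-\gamma\le(2\gamma+9)/12$, so it is absorbed. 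The companion sum $S'_I$ carries an extra factor $\log l\ll\log N$, which a single partial summation removes at the cost of a harmless $N^{\varepsilon}$, so it obeys the same bound.

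The step I expect to be the main obstacle is the third-derivative estimate $y'''\asymp N^{\gamma-3}$ with uniform implied constants: one must differentiate the implicit relation three times and verify that the accumulating products of $\tan(\log y)$, $\sec^2(\log y)$ and their derivatives neither vanish nor blow up on $[\Delta_1,\Delta_2)$. Everything else --- the reduction to the inner sum, the choice $k=1$, and the final optimization in $L$ --- is routine bookkeeping once that derivative bound is secured.
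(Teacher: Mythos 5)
Your proposal is correct and follows essentially the same route as the paper: fix $d$, bound the inner sum over $l$ with the van der Corput estimate of Lemma \ref{GrahamandKolesnik} at $k=1$ using the derivative asymptotics $\partial^r f/\partial l^r \asymp hN^{\gamma}L^{-r}$ for $r=1,2,3$ (which hold because $\tan(\log y)\in[1,2)$ on $[\Delta_1,\Delta_2)$), then sum over $d\ll N/L\ll N^{1/2}$ and invoke $L\ge 2^{-10}N^{1/2}$ to reach $h^{1/6}N^{(2\gamma+9)/12+\varepsilon}$, with Abel summation handling $S'_I$. The absorption of the term $h^{-1}N^{1-\gamma}$ that you verify explicitly is left implicit in the paper, but the argument is the same.
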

\begin{proof}
First we notice that \eqref{SI}  and \eqref{Conditions1} imply
\begin{equation}\label{LMasympX}
DL\asymp N\,.
\end{equation}
Denote
\begin{equation}\label{flm}
f(d, l)=m'_{dl}\,.
\end{equation}
By   \eqref{SI}, \eqref{Conditions1} and \eqref{flm}  we write
\begin{equation}\label{SIest1}
S_I\ll N^\varepsilon\sum\limits_{D<d\le D_1}\bigg|\sum\limits_{L'<l\leq L'_1}e\big(h f(d, l)\big)\bigg|\,,
\end{equation}
where
\begin{equation}\label{L'L1'}
L'=\max{\bigg\{L,\frac{N}{m}\bigg\}}\,,\quad L_1'=\min{\bigg\{L_1,\frac{N_1}{m}\bigg\}}\,.
\end{equation}
From  \eqref{Conditions1} and \eqref{L'L1'} for the sum in \eqref{SIest1} it follows
\begin{equation}\label{L'andL1'inL2L}
\left|\begin{array}{cccc}
D<d\le D_1\quad \quad\\
L'<l\leq L'_1  \quad \quad\; \\
N<dl\le N_1 \quad \;\;\\
(L', L'_1]\subseteq (L, 2L]
\end{array}\right..
\end{equation}
According to \eqref{subset}, \eqref{System1}, \eqref{NXc}, \eqref{Conditions1}, \eqref{flm} and \eqref{L'andL1'inL2L}
the function $f(d, l)$ is an implicit function of $d$ and $l$ defined by
\begin{equation}\label{Implicitfunction3}
y^c\tan^\theta(\log y)=dl \quad \mbox{ for } \quad  dl\in (N, N_1]
\end{equation}
and
\begin{equation}\label{yDelta1Delta2SI}
y\subset[\Delta_1, \Delta_2)\,.
\end{equation}
On the other hand for the function defined by  \eqref{flm} and \eqref{Implicitfunction3} we find
\begin{equation}\label{firstderivativel}
\frac{\partial f(d, l)}{\partial l}=\frac{dy^{1-c}}{\big(c\tan(\log y)+\theta\sec^2(\log y)\big)\tan^{\theta-1}(\log y)}\,.
\end{equation}
Now \eqref{Delta1},  \eqref{Delta2}, \eqref{NXc}, \eqref{LMasympX}, \eqref{L'andL1'inL2L},
\eqref{yDelta1Delta2SI} and \eqref{firstderivativel}  yields
\begin{equation}\label{firstderivativelasymp}
\frac{\partial f(d,l)}{\partial l}\asymp N^\gamma L^{-1}\,.
\end{equation}
Proceeding in the same way we get
\begin{equation}\label{secondderivativelasymp}
\frac{\partial^2f(d,l)}{\partial l^2}\asymp N^\gamma L^{-2}
\end{equation}
and
\begin{equation}\label{thirdderivativelasymp}
\frac{\partial^3f(d,l)}{\partial l^3}\asymp  N^\gamma L^{-3}\,.
\end{equation}
Using  \eqref{Conditions1}, \eqref{LMasympX}, \eqref{SIest1}, \eqref{L'andL1'inL2L}, \eqref{firstderivativelasymp},
\eqref{secondderivativelasymp}, \eqref{thirdderivativelasymp} and Lemma \ref{GrahamandKolesnik} with  $k=1$ we obtain
\begin{align*}\label{SIest1}
S_I&\ll N^\varepsilon\sum\limits_{D<d\le D_1}\Big(h^{\frac{1}{6}}N^{\frac{\gamma}{6}} L^{\frac{1}{2}}
+h^{-1}N^{-\gamma}L\Big)\\
&\ll N^\varepsilon\Big(D h^{\frac{1}{6}}N^{\frac{\gamma}{6}} L^{\frac{1}{2}} +h^{-1}N^{-\gamma}DL\Big)\\
&\ll N^\varepsilon\Big(D^{\frac{1}{2}} h^{\frac{1}{6}}N^{\frac{1}{2}+\frac{\gamma}{6}}  +h^{-1}N^{1-\gamma}\Big)\\
&\ll N^\varepsilon\Big( h^{\frac{1}{6}}N^{\frac{3}{4}+\frac{\gamma}{6}}  +h^{-1}N^{1-\gamma}\Big)\\
&\ll h^{\frac{1}{6}}N^{\frac{2\gamma+9}{12}+\varepsilon} \,.                                                                                                                     \end{align*}
To estimate the sum defined by \eqref{SI'} we apply Abel's summation formula and proceed in the same way to deduce
\begin{equation*}
S_I'\ll h^{\frac{1}{6}}N^{\frac{2\gamma+9}{12}+\varepsilon} \,.
\end{equation*}
This proves the lemma.
\end{proof}

\begin{lemma}\label{SIIest}
Set
\begin{equation}\label{SII}
S_{II}=\mathop{\sum\limits_{D<d\le D_1}a(d)\sum\limits_{L<l\le L_1}}_{N<dl\le N_1}b(l)e\big(hm'_{dl}\big)\,,
\end{equation}
where
\begin{equation}
\begin{split}\label{Conditions2}
&2^3\leq L\leq 2^7N^{\frac{1}{3}}\,,\quad D_1\le 2D\,,\quad L_1\le 2L\,,\quad N_1\le 2N\,,\\
&a(d)\ll \tau _5(d)\log N\,,\quad b(l)\ll \tau _5(l)\log N\,.
\end{split}
\end{equation}
Then
\begin{equation*}
S_{II}\ll h^{\frac{1}{4}} N^{\frac{3\gamma+8}{12}+\varepsilon}\,.
\end{equation*}
\end{lemma}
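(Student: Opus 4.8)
The plan is to reduce $S_{II}$ to one-dimensional exponential sums over the \emph{long} variable $d$ and to estimate those by Lemma \ref{GrahamandKolesnik}, in parallel with the proof of Lemma \ref{SIest}. Exactly as \eqref{LMasympX} was obtained, the ranges in \eqref{Conditions2} give $DL\asymp N$, hence $D\asymp N/L\ge N^{2/3}$, so $d$ is long and $l$ is short. I would begin by applying Cauchy's inequality in $d$ to detach $a(d)$: using $a(d)\ll\tau_5(d)\log N$ and the divisor bound $\sum_{D<d\le D_1}\tau_5^2(d)\ll DN^\varepsilon$,
\[
|S_{II}|^2\ll DN^\varepsilon\sum_{D<d\le D_1}\bigg|\sum_{L<l\le L_1}b(l)e\big(hm'_{dl}\big)\bigg|^2 .
\]
Opening the inner square (or invoking Lemma \ref{Squareoutlemma}) and writing $s=l_1-l_2$, the terms with $l_1=l_2$ give the diagonal
\[
\ll DN^\varepsilon\sum_{D<d\le D_1}\ \sum_{L<l\le L_1}|b(l)|^2\ll DN^\varepsilon\cdot DL\ll \frac{N^{2+\varepsilon}}{L},
\]
while the remaining terms give an off-diagonal sum whose innermost piece, for each fixed $s\neq0$, is $\sum_{D<d\le D_1}e(\chi(d))$ with $\chi(d)=h\big(m'_{d(l_2+s)}-m'_{dl_2}\big)$.

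The heart of the matter is the estimation of this $d$-sum. Treating $m'_{dl}$ through the implicit relation \eqref{Implicitfunction3} and repeating the derivative computation of \eqref{firstderivativelasymp}--\eqref{thirdderivativelasymp}, now differentiating in $d$ (legitimate because $\tan(\log y)\asymp1$ on $[\Delta_1,\Delta_2)$, so $m'_{dl}$ behaves like a constant times $(dl)^\gamma$), one finds $\chi^{(r)}(d)\asymp h|s|N^{\gamma-1}D^{1-r}$ for $r=1,2$. Thus Lemma \ref{GrahamandKolesnik} with $k=0$ applies with $F\asymp h|s|N^{\gamma}/L$ and gives
\[
\sum_{D<d\le D_1}e(\chi(d))\ll F^{1/2}+F^{-1}D\ll h^{1/2}|s|^{1/2}N^{\gamma/2}L^{-1/2}+h^{-1}|s|^{-1}N^{1-\gamma}.
\]

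Next I would sum over the $\ll L$ choices of $l_2$ and over $1\le|s|<L$, using $|b(l_2+s)\overline{b(l_2)}|\ll N^\varepsilon$ and $\sum_{s<L}s^{1/2}\ll L^{3/2}$, and then reinsert the factor $DN^\varepsilon$. Collecting the diagonal and off-diagonal contributions yields
\[
|S_{II}|^2\ll N^\varepsilon\Big(\tfrac{N^2}{L}+h^{1/2}N^{\gamma/2+1}L+h^{-1}N^{2-\gamma}\Big).
\]
Since $\gamma>11/12$ one checks $h^{-1}N^{2-\gamma}\ll h^{1/2}N^{(3\gamma+8)/6}$, and for $L\le 2^7N^{1/3}$ the middle term is $\ll h^{1/2}N^{\gamma/2+4/3}=h^{1/2}N^{(3\gamma+8)/6}$; taking square roots gives the asserted $S_{II}\ll h^{1/4}N^{(3\gamma+8)/12+\varepsilon}$.

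The main obstacle I anticipate is twofold. First, one must justify the derivative asymptotics for $\chi$ uniformly in $s$ and in the short parameter $l_2$, so that Lemma \ref{GrahamandKolesnik} applies with the stated $F$; this is precisely where the special window $[\Delta_1,\Delta_2)$, keeping $\tan(\log y)$ bounded away from $0$ and $\infty$, is indispensable. Second, and more delicately, one must ensure the diagonal $N^2/L$ is dominated by $h^{1/2}N^{(3\gamma+8)/6}$, which forces $L$ to lie towards the upper end of its range (roughly $L\gg N^{2/3-\gamma/2}$); balancing this diagonal against the van der Corput saving in the off-diagonal is the crux of the whole estimate.
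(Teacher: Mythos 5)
Your reduction is the same as the paper's: Cauchy's inequality in the long variable $d$, a Weyl-type differencing in the short variable $l$, and Lemma \ref{GrahamandKolesnik} with $k=0$ applied to the resulting $d$-sums. Your off-diagonal analysis agrees with the paper's (and is in fact more careful, since you retain the factor $|s|$ in the derivative bounds for the differenced phase, where the paper records $\partial f/\partial d\asymp N^{\gamma}D^{-1}$ independently of the shift). The genuine gap is exactly where you point but do not resolve: the diagonal. Your own display gives $|S_{II}|^{2}\ll N^{2+\varepsilon}/L$ from the terms $l_{1}=l_{2}$, and the hypothesis \eqref{Conditions2} guarantees only $L\ge 2^{3}$; for $L$ near this lower end the diagonal is essentially $N^{2}$, which is not $\ll h^{1/2}N^{(3\gamma+8)/6}$ --- that would require $L\gg N^{(4-3\gamma)/6}$, a positive power of $N$, as you yourself compute. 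Hence your final inequality does not imply the asserted bound, and labelling the obstacle ``the crux'' does not discharge it: the lemma as stated is not proved by your argument.

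It is worth noting that your honest bookkeeping exposes a defect in the paper's own proof rather than introducing a new one. In \eqref{SIIest1} the paper applies Lemma \ref{Squareoutlemma} to the $l$-sum with $Q=N^{1/2}$ and writes the prefactor as $L/Q$; the lemma actually gives $1+L/Q$, and since $Q=N^{1/2}>2^{7}N^{1/3}\ge L$ this factor is $\asymp 1$, not $L/Q$. With the correct factor the paper's diagonal $N^{2}/Q=N^{3/2}$ becomes your $N^{2}/L$, and the same problem appears there. A repair must force $L$ to exceed a small positive power of $N$ in the Type II range, i.e.\ raise $U$ in Lemma \ref{Heath-Brown} from $2^{3}$ to roughly $N^{(4-3\gamma)/6}$; but the constraint $128UZ^{2}\le P_{1}$ then forces $Z$ to be lowered below $N^{1/2}$, which in turn weakens the Type I condition $L\ge Z$ and obliges one to redo Lemma \ref{SIest} with a larger range for $D$. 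So the fix is a nontrivial rebalancing of $U,V,Z$, not a one-line adjustment. As written, neither your proposal nor the paper's proof of this lemma is complete.
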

\begin{proof}
First we notice that  \eqref{SII}  and \eqref{Conditions2} give us
\begin{equation}\label{LMasympX2}
DL\asymp N\,.
\end{equation}
From \eqref{Delta1}, \eqref{flm}, \eqref{SII}, \eqref{Conditions2}, \eqref{LMasympX2}, Cauchy's inequality
and Lemma \ref{Squareoutlemma} with $Q=N^{\frac{1}{2}}$ it follows
\begin{align}\label{SIIest1}
S_{II}&\ll\left(\sum\limits_{D<d\le D_1}|a(d)|^2\right)^{\frac{1}{2}}
\left(\sum\limits_{D<d\le D_1}\bigg|\sum\limits_{L<l\le L_1\atop{N<dl\le N_1}}
b(l)e\big(h f(d, l)\big)\bigg|^2\right)^{\frac{1}{2}}\nonumber\\
&\ll D^{\frac{1}{2}+\varepsilon}\left(\sum\limits_{D<d\le D_1}\frac{L}{Q}\sum_{|q|<Q}\bigg(1-\frac{q}{Q}\bigg)
\sum\limits_{L<l, \, l+q\leq L_1\atop{N<dl\le N_1\atop{N<d(l+q)\le N_1}}}b(l+q)\overline{b(l)}
e\Big(h f(d, l+q)-h f(d, l)\Big)\right)^{\frac{1}{2}}\nonumber\\
&\ll D^{\frac{1}{2}+\varepsilon}\Bigg(\frac{L}{Q}\sum\limits_{D<d\le D_1}\Bigg(L^{1+\varepsilon}\nonumber\\
&\hspace{20mm}+\sum_{1\leq |q|<Q}\bigg(1-\frac{q}{Q}\bigg)
\sum\limits_{L<l, \, l+q\leq L_1\atop{N<dl\le N_1\atop{N<d(l+q)\le N_1}}}
b(l+q)\overline{b(l)}e\Big(h f(d, l+q)-h f(d, l)\Big)\Bigg)^{\frac{1}{2}}\nonumber\\
&\ll N^\varepsilon\Bigg(\frac{N^2}{Q}+\frac{N}{Q}\sum\limits_{1\leq |q|\leq Q}
\sum\limits_{L<l, \, l+q\leq L_1}\bigg|\sum\limits_{D'<d\leq D_1'}e\big(h f(d, l, q)\big)\bigg|\Bigg)^{\frac{1}{2}}\,,
\end{align}
where
\begin{equation}\label{M'M1'}
D'=\max{\bigg\{D,\frac{N}{l},\frac{N}{l+q}\bigg\}}\,,
\quad D_1'=\min{\bigg\{D_1,\frac{N_1}{l},\frac{N_1}{l+q}\bigg\}} \end{equation}
and
\begin{equation}\label{fdlq}
f(d, l, q)=m'_{d(l+q)}-m'_{dl}\,.
\end{equation}
From  \eqref{Conditions2} and \eqref{M'M1'} for the sum  in \eqref{SIIest1} we have
\begin{equation}\label{M'M1'inM2M}
\left|\begin{array}{ccccc}
L<l, \, l+q\leq L_1\quad\; \\
D'<d\leq D_1'\quad \quad \quad \; \\
N<dl\le N_1\quad\quad\quad \; \\
N<d(l+q)\le N_1\;\; \\
(D', D'_1]\subseteq (D, 2D] \quad
\end{array}\right..
\end{equation}
According to \eqref{subset}, \eqref{System1}, \eqref{NXc}, \eqref{Conditions2}, \eqref{fdlq} and \eqref{M'M1'inM2M}
the function $f(d, l, q)$  is the difference of two implicit functions of $d$ and $l $ defined by
\begin{equation}\label{Implicitfunction4}
y^c\tan^\theta(\log y)=d(l+q) \quad \mbox{ for } \quad  d(l+q)\in (N, N_1]
\end{equation}
and
\begin{equation}\label{Implicitfunction5}
y^c\tan^\theta(\log y)=dl \quad \mbox{ for } \quad  dl\in (N, N_1]
\end{equation}
and for both functions
\begin{equation}\label{yDelta1Delta2SII}
y\subset[\Delta_1, \Delta_2)\,.
\end{equation}
We have
\begin{align}\label{firstderivatived}
\frac{\partial f(d, l, q)}{\partial d}
&=\frac{(l+q)y^{1-c}}{\big(c\tan(\log y)+\theta\sec^2(\log y)\big)\tan^{\theta-1}(\log y)}  \nonumber\\
&-\frac{ly^{1-c}}{\big(c\tan(\log y)+\theta\sec^2(\log y)\big)\tan^{\theta-1}(\log y)}
\end{align}
Now \eqref{Delta1},  \eqref{Delta2}, \eqref{NXc}, \eqref{LMasympX2}, \eqref{M'M1'inM2M},
\eqref{yDelta1Delta2SII} and \eqref{firstderivatived}  imply
\begin{equation}\label{firstderivativedasymp}
\frac{\partial f(d, l, q)}{\partial d}\asymp N^\gamma D^{-1}\,.
\end{equation}
Arguing in the same way we get
\begin{equation}\label{secondderivativedasymp}
\frac{\partial^2f(d,l,q)}{\partial d^2}\asymp N^\gamma D^{-2}\,. \end{equation}
Now \eqref{Conditions2}, \eqref{LMasympX2}, \eqref{SIIest1}, \eqref{M'M1'inM2M}, \eqref{firstderivativedasymp},
\eqref{secondderivativedasymp} and Lemma \ref{GrahamandKolesnik} with  $k=0$  give us
\begin{align*}
S_{II}&\ll N^\varepsilon\Bigg(\frac{N^2}{Q}+\frac{N}{Q}\sum\limits_{1\leq q\leq Q}
\sum\limits_{L<l\leq L_1}\Big(h^{\frac{1}{2}}N^{\frac{\gamma}{2}}+h^{-1}N^{-\gamma}D\Big)\Bigg)^{\frac{1}{2}}\\
&\ll N^\varepsilon\Big(N^2Q^{-1}+h^{\frac{1}{2}} L N^{1+\frac{\gamma}{2}}+h^{-1}N^{2-\gamma}\Big)^{\frac{1}{2}}\\
&\ll N^\varepsilon\Big(NQ^{-\frac{1}{2}}+h^{\frac{1}{4}} L^\frac{1}{2} N^{\frac{\gamma+2}{4}}+h^{-\frac{1}{2}}N^{1-\frac{\gamma}{2}}\Big)\\
&\ll N^\varepsilon\Big(N^{\frac{3}{4}}+h^{\frac{1}{4}} N^{\frac{3\gamma+8}{12}} +h^{-\frac{1}{2}}N^{1-\frac{\gamma}{2}}\Big) \\
&\ll h^{\frac{1}{4}} N^{\frac{3\gamma+8}{12}+\varepsilon}\,.
\end{align*}
This proves the lemma.
\end{proof}

\begin{lemma}\label{Sigma4est}
Then  for the exponential sum denoted by \eqref{Sigma4} we have
\begin{equation*}
\Sigma_4(N, N_1)\ll N^\varepsilon\Big(h^{\frac{1}{6}}N^{\frac{2\gamma+9}{12}}+ h^{\frac{1}{4}} N^{\frac{3\gamma+8}{12}}\Big)\,.
\end{equation*}
\end{lemma}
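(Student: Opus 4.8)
The plan is to apply Heath-Brown's identity (Lemma \ref{Heath-Brown}) to the sum $\Sigma_4(N,N_1)$ defined by \eqref{Sigma4}, taking $G(n)=e(hm'_n)$, $P=N$ and $P_1=N_1$, and then to bound the resulting Type I and Type II pieces by the already-established Lemmas \ref{SIest} and \ref{SIIest}. The whole point is to choose the splitting parameters $U,V,Z$ so that every Type I sum lands in the range \eqref{Conditions1} and every Type II sum lands in the range \eqref{Conditions2}.

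First I would set
\[
U=2^3,\qquad V=2^7N^{\frac{1}{3}},\qquad Z=2^{-10}N^{\frac{1}{2}},
\]
and verify the six hypotheses of Lemma \ref{Heath-Brown}. Since $N\asymp x^c\to\infty$ by \eqref{NXc}, for all large $x$ the inequalities $P>2$ and $P_1\le 2P$ are immediate; the chain $2\le U<V\le Z\le P$ reduces to $2^7N^{1/3}\le2^{-10}N^{1/2}$, i.e.\ $N\ge2^{102}$; the bound $U^2\le Z$ reads $64\le2^{-10}N^{1/2}$; the bound $128UZ^2\le P_1$ reads $2^{-10}N\le N_1$, which holds because $N_1>N$; and $2^{18}P_1\le V^3=2^{21}N$ holds because $P_1\le2N$. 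Hence the decomposition is available for every sufficiently large $x$.

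Heath-Brown's identity then expresses $\Sigma_4(N,N_1)$ as $O(\log^6N)$ sums, each either of Type I or of Type II. Every Type I sum carries $L\ge Z=2^{-10}N^{1/2}$ together with $D_1\le2D$, $L_1\le2L$, $N_1\le2N$ and $a(d)\ll\tau_5(d)\log N$, so it has exactly the shape \eqref{SI} or \eqref{SI'} admitted by Lemma \ref{SIest}, giving a contribution $\ll h^{\frac{1}{6}}N^{\frac{2\gamma+9}{12}+\varepsilon}$. Every Type II sum carries $2^3=U\le L\le V=2^7N^{1/3}$ with divisor-bounded coefficients $a(d)\ll\tau_5(d)\log N$ and $b(l)\ll\tau_5(l)\log N$, so it has exactly the shape \eqref{SII} admitted by Lemma \ref{SIIest}, giving a contribution $\ll h^{\frac{1}{4}}N^{\frac{3\gamma+8}{12}+\varepsilon}$.

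Finally I would sum the $O(\log^6N)$ contributions, absorbing the logarithmic factor together with the $\log N$ from the coefficients into $N^\varepsilon$, to obtain
\[
\Sigma_4(N,N_1)\ll N^\varepsilon\Big(h^{\frac{1}{6}}N^{\frac{2\gamma+9}{12}}+h^{\frac{1}{4}}N^{\frac{3\gamma+8}{12}}\Big),
\]
as claimed. The genuine analytic work—the second- and third-derivative (Graham--Kolesnik) estimates for the exponential sums over the implicitly defined $m'_{dl}$—was already carried out in Lemmas \ref{SIest} and \ref{SIIest}; the only delicate point in the present lemma is the compatibility of the parameter windows. The condition $128UZ^2\le P_1$ requires $Z\le2^{-5}N^{1/2}$, which must be reconciled with the Type I floor $Z\ge2^{-10}N^{1/2}$ demanded by \eqref{Conditions1}, while the condition $2^{18}P_1\le V^3$ requires $V\ge2^6N^{1/3}$, which must be reconciled with the Type II ceiling $V\le2^7N^{1/3}$ of \eqref{Conditions2}. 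Both windows are non-empty, and this is precisely why the constants $2^{-10}$, $2^3$ and $2^7$ were built into the hypotheses of the two preceding lemmas.
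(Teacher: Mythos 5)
Your proof is correct and follows exactly the paper's route: the paper likewise chooses $U=2^3$, $V=2^7N^{1/3}$, $Z=2^{-10}N^{1/2}$, invokes Lemma \ref{Heath-Brown} to split $\Sigma_4(N,N_1)$ into $O(\log^6N)$ Type I and Type II sums, and closes with Lemmas \ref{SIest} and \ref{SIIest}. Your version is in fact more careful than the paper's, since you explicitly verify the hypotheses $2\le U<V\le Z\le P$, $U^2\le Z$, $128UZ^2\le P_1$ and $2^{18}P_1\le V^3$, which the paper leaves unchecked.
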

\begin{proof}
Take
\begin{equation}\label{UVZ}
U=2^3\,,\quad V=2^7N^{\frac{1}{3}}\,,\quad Z= 2^{-10}N^{\frac{1}{2}}\,.
\end{equation}
According to Lemma \ref{Heath-Brown}, the sum $\Sigma_4(N, N_1)$
can be decomposed into $O\Big(\log^6N\Big)$ sums, each of which is either of Type I
\begin{equation*}
S_I=\mathop{\sum\limits_{D<d\le D_1}a(d)\sum\limits_{L<l\le L_1}}_{N<dl\le N_1}e\big(hm'_{dl}\big)
\end{equation*} and
\begin{equation*}
S'_I=\mathop{\sum\limits_{D<d\le D_1}a(d)\sum\limits_{L<l\le L_1}}_{N<dl\le N_1}e\big(hm'_{dl}\big)\log l\,,
\end{equation*}
where
\begin{equation*}
L\ge Z\,,\quad D_1\le 2D\,,\quad L_1\le 2L\,,\quad a(d)\ll \tau _5(d)\log N
\end{equation*}
or of Type II
\begin{equation*}
S_{II}=\mathop{\sum\limits_{D<d\le D_1}a(d)\sum\limits_{L<l\le L_1}}_{N<dl\le N_1}b(l)e\big(hm'_{dl}\big)
\end{equation*}
where
\begin{equation*}
U\le L\le V\,,\quad D_1\le 2D\,,\quad L_1\le 2L\,,\quad
a(d)\ll \tau _5(d)\log N\,,\quad b(l)\ll \tau _5(l)\log N\,.
\end{equation*}
Bearing in mind  \eqref{UVZ}, Lemma \ref{SIest} and  Lemma \ref{SIIest} we establish the statement in the lemma.
\end{proof}
We are now in a good position to estimate the sum $\Sigma_5(N, N_1)$ defined by  \eqref{Sigma5}.
Using \eqref{M}, \eqref{Sigma5}  and Lemma \ref{Sigma4est} we write
\begin{align}\label{Sigma5est}
\Sigma_5(N, N_1)&\ll N^{\frac{2\gamma+9}{12}+\varepsilon}\sum\limits_{1\leq h\leq M}h^{\frac{1}{6}}
+ N^{\frac{3\gamma+8}{12}+\varepsilon}\sum\limits_{1\leq h\leq M}h^{\frac{1}{4}}\nonumber\\
&\ll N^{\frac{2\gamma+9}{12}+\varepsilon}M^{\frac{7}{6}}+ N^{\frac{3\gamma+8}{12}+\varepsilon}M^{\frac{5}{4}}\nonumber\\
&\ll N^{\frac{23-12\gamma}{12}+\varepsilon}\,.
\end{align}
We will now take the final step in estimating the sum $\Sigma$.
By \eqref{NXc}, \eqref{Sigmaest3} and \eqref{Sigma5est} we deduce
\begin{align}\label{Sigmaest4}
\Sigma&\ll (\log x)\max_{\Delta_1^c \tan^\theta(\log \Delta_1)-1
\leq N\leq \frac{1}{2}\Delta_2^c \tan^\theta(\log \Delta_2)}\Big(N^{\gamma-1}
N^{\frac{23-12\gamma}{12}+\varepsilon}\Big)+\frac{x}{\log^2x}\nonumber\\
&\ll (\log x)\max_{\Delta_1^c \tan^\theta(\log \Delta_1)-1
\leq N\leq \frac{1}{2}\Delta_2^c \tan^\theta(\log \Delta_2)}\Big(N^{\frac{11}{12}+\varepsilon}\Big)+\frac{x}{\log^2x}\nonumber\\
&\ll\frac{x}{\log^2 x}\,.
\end{align}

\subsection{The end of the proof}

Bearing in mind \eqref{ScxGammaSigma}, \eqref{Gammaest2} and \eqref{Sigmaest4} we establish that
\begin{equation*}
S_c(x)\gg \frac{x}{\log x}\,.
\end{equation*}

The Theorem is proved.

\vskip20pt
\footnotesize
\begin{flushleft}
S. I. Dimitrov\\
Faculty of Applied Mathematics and Informatics\\
Technical University of Sofia \\
8, St.Kliment Ohridski Blvd. \\
1756 Sofia, BULGARIA\\
e-mail: sdimitrov@tu-sofia.bg\\
\end{flushleft}
\end{document}